\documentclass[a4paper,11pt]{article}
\usepackage{graphicx}
\usepackage{amssymb}
\usepackage{latexsym, bm}
\usepackage{cite}
\usepackage{multicol}
\usepackage{hyperref}
\usepackage{indentfirst}
\usepackage{amssymb,amsfonts}
\usepackage{amsmath}
\usepackage{setspace}
\usepackage{enumitem}
\newtheorem{theorem}{Theorem}
\newtheorem{lemma}{Lemma}[section]

\newtheorem{claim}{Claim}

\newtheorem{conjecture}{Conjecture}

\usepackage{amssymb}

\begin{document}
	\title{ Towards a conjecture on degree conditions for Ramsey goodness of paths}
	
	\date{}
	
\author{{Chunlin You\footnote{School of Mathematics and Statistics, Yancheng Teachers University, Yancheng
			224002, P.~R.~China. E-mail: {\tt youcl@yctu.edu.cn}.
			Supported by the National Natural Science Foundation of China
			(No.12401469) and Young Elite Scientist Sponsorship Program by JSAST(JSTJ-2025-892).}}}
			
			
\maketitle
\begin{abstract}
Recently, Arag\~{a}o, Marciano, and Mendon\c{c}a [\emph{European J. Combin.}, 2025] 
conjectured that for any graph $G$ on $n$ vertices satisfying $(r-1)(t-1)k < n \le (r-1)(t-1)(k+1)$, the minimum degree condition $\delta(G) \ge n - \left\lceil \frac{k}{k+1} \left\lceil \frac{n}{r-1} \right\rceil \right\rceil$ guarantees that $G \rightarrow (K_r, P_t)$. 
In this paper, we prove their conjecture for the regime $k \ge t-3$. 
Because the parameter $k$ scales linearly with the host graph order $n$, our result 
establishes the asymptotic truth of the conjecture.

\medskip

{\bf Keywords:} \  Ramsey number; Ramsey goodness; Degree conditions
\medskip

{\bf Mathematics Subject Classifications:}  05D10, 05C55
\end{abstract}

\vspace{0.3cm}

\section{Introduction}
In Ramsey theory, the study of Ramsey goodness investigates conditions under which the Ramsey number of a sparse graph versus a dense graph is exactly determined by the chromatic number of the dense graph and the size of the sparse graph.
The chromatic number
$\chi(G)$ of a graph $G$ is the minimum number of colors needed to color its vertices so that no two adjacent vertices share the same color.
The chromatic surplus $s(G)$
is the minimum size of a color class among  all proper vertex-colorings
of $G$  with exactly $\chi(G)$ colors. Burr \cite{burr}
established the following general lower bound.

\begin{lemma}[\cite{burr}]\label{low-bou}
Let $H$ be a connected graph of order $n\ge s(G)$, then
\[r(G,H)\ge(\chi(G)-1)(n-1)+s(G).\]
\end{lemma}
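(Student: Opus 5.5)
We prove the bound by exhibiting a 2-coloring of the complete graph on $N=(\chi(G)-1)(n-1)+s(G)-1$ vertices that contains neither a copy of $G$ in the first color (red) nor a copy of $H$ in the second color (blue). This shows $r(G,H)>N$, which is exactly the claimed inequality. Write $\chi=\chi(G)$ and $s=s(G)$.

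\emph{The construction.} Take $\chi-1$ disjoint vertex sets $V_1,\dots,V_{\chi-1}$, each of size $n-1$, and one further set $V_\chi$ of size $s-1$. This uses $N$ vertices in total. Color every edge inside a part blue and every edge between different parts red. The hypothesis $n\ge s$ gives $s-1\le n-1$, so every part has at most $n-1$ vertices.

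\emph{No blue $H$.} The blue graph is a disjoint union of cliques, each on at most $n-1$ vertices. A blue copy of $H$ would be connected, because $H$ is connected, so it would lie inside a single blue clique. That clique has fewer than $n$ vertices, which is too few for $H$.

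\emph{No red $G$.} The red graph is the complete multipartite graph with parts $V_1,\dots,V_\chi$. Suppose it contained a copy of $G$. Assigning to each vertex of $G$ the index of the part containing it gives a proper coloring of $G$, since red edges only join different parts. This coloring uses at most $\chi$ colors, so it uses exactly $\chi$ colors because $\chi(G)=\chi$. The class coming from $V_\chi$ then has size at most $s-1$. If it is nonempty, this is a proper $\chi$-coloring with a class smaller than $s(G)$, contradicting the definition of the chromatic surplus. If it is empty, $G$ is properly colored with $\chi-1$ colors, contradicting the definition of $\chi(G)$.

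\emph{Degenerate cases.} When $s=1$, the part $V_\chi$ is empty and the same argument applies verbatim. The only point needing care anywhere in the proof is this handling of the small part in the red case, using both the minimality of $\chi(G)$ and the minimality of $s(G)$. The remaining verifications are routine.
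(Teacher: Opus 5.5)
Your proof is correct. Coloring $K_{(\chi(G)-1)(n-1)+s(G)-1}$ as $\chi(G)-1$ blue cliques of order $n-1$ plus one blue clique of order $s(G)-1$, with all edges between cliques red, is Burr's standard construction; the paper cites the lemma from Burr and does not reproduce a proof (your ``if it is empty'' subcase is already ruled out by your observation that the induced coloring uses exactly $\chi(G)$ colors, so it is redundant but harmless).
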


In 1983, Burr and Erd\H{o}s \cite{burr-1983}
introduced the concept of goodness:   a connected graph $H$ is  called $G$-good if the equality of Lemma
\ref{low-bou} holds.

A seminal result in this area is Chv\'atal's theorem \cite{chv}, which states that all trees are $r$-good for any integer $r \ge 2$. Since then, the Ramsey goodness of various graph classes has attracted significant attention and yielded profound results in extremal combinatorics \cite{ABS,  BPS, FL, FHW,  LLD,  Ll, NiR, PS,  PSu, ZC}.

Parallel to the evaluation of classical Ramsey numbers, another fundamental problem in extremal graph theory asks: what minimum degree threshold on a host graph $G$ forces it to possess specific Ramsey properties? This direction was pioneered by Schelp \cite{Sch} and has been extensively explored for symmetric structures like paths and cycles \cite{BKL, BLS, GyS, NiS, LuR, LR, ZP}. Recently, the research frontier has advanced toward more complex asymmetric Schelp-type problems. Notably, Arag\~{a}o, Marciano, and Mendon\c{c}a \cite{AMM} investigated the minimum degree thresholds for the pair $(K_r, P_t)$. They proved that for $n \ge (r-1)(t-1)+1$, $\delta(G) \ge n - \lceil t/2 \rceil$ suffices to guarantee $G \rightarrow (K_r, P_t)$, thereby partially generalizing Chv\'atal's theorem under degree restrictions.
Furthermore, they proposed a conjecture that extends these results to arbitrary cliques.
\begin{conjecture}[Arag\~{a}o,  Marciano and  Mendon\c{c}a \cite{AMM}]\label{coj-1}
Let $r, t\in $ $\mathbb{N}$ with $r\geq 2$, let $G$ be a graph with $n$ vertices, and let $k\in$ $\mathbb{N}$
be such that $(r-1)(t-1)k<n\leq (r-1)(t-1)(k+1)$. If
\[
\delta(G)\geq n-\left\lceil \frac{k}{k+1}\left\lceil \frac{n}{r-1}\right\rceil  \right\rceil
\]
then
$G\rightarrow(K_r,P_t)$.
\end{conjecture}


Recently, Luo and Peng \cite{LP}made notable progress by confirming Conjecture 1 for specific parameter configurations. Translated into the notation of our paper, their partial result can be explicitly stated as follows:

\vspace{2mm}

	\begin{theorem}[Luo and Peng \cite{LP}]
		Let $t \ge 3$ and $k \ge t - 3$. If $r - 2 \not\equiv 0 \pmod{t - 1}$, then Conjecture 1 holds for any host graph whose order $n$ satisfies
		$$k(r - 1)(t - 1) < n \le k(r - 1)(t - 1) + r - 1.$$
	\end{theorem}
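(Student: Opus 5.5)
The plan is to argue by induction on $r$, in the spirit of Chv\'atal's proof that trees are $r$-good, with the degree condition taking the place of completeness of the host graph.

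Fix a red/blue colouring of $E(G)$ with no red $K_r$, and suppose for contradiction that there is no blue $P_t$. Write $d = \left\lceil \frac{k}{k+1}\left\lceil \frac{n}{r-1}\right\rceil\right\rceil$, so every vertex has at most $d-1$ non-neighbours in $G$.

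\textbf{Base case $r=2$.} Here there are no red edges, so $G$ itself must contain $P_t$. The hypothesis gives $n = k(t-1)+1$. Moreover $\delta(G) \ge n-d$ and $n-d \approx n/(k+1) > t-2$. Combining a Dirac/Erd\H{o}s--Gallai path argument with the fact that small components cannot exist (a component of order at most $t-1$ forces $\delta \le t-2$), we should obtain a path on $t$ vertices.

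\textbf{Inductive step.} For each vertex $v$, the red neighbourhood $N_R(v)$ spans no red $K_{r-1}$. Let $W \subseteq N_R(v)$ be any set, and let $n' = |W|$. Every vertex of $W$ misses at most $d-1$ vertices of $W$. If $n'$ is large enough that $n'-d$ is at least the threshold required by the statement for $r-1$ (with the appropriate $k'$, and with $n'$ in the right window), then by induction $G[W]$ contains a blue $P_t$, a contradiction. This yields an upper bound on every red degree, roughly $d_R(v) \le (r-2)(t-1)(k+1)$ with a correction term. It follows that every blue degree is large:
\[ d_B(v) \ge \delta(G) - d_R(v). \]

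\textbf{Endgame.} I would look at the blue components. Each blue component $C$ has order at most $t-1$, or else, via its large minimum degree and a P\'osa-type rotation argument, it already contains $P_t$. On the other hand, vertices in different blue components are joined only by red edges or non-edges. Pick one vertex from each of $r$ distinct blue components, chosen greedily so that they are pairwise adjacent in $G$; this produces a red $K_r$. Such a choice is possible as long as the number of blue components exceeds $r-1$ by an amount that absorbs the non-neighbourhoods, each of size at most $d-1$. Counting, $n > k(r-1)(t-1)$ forces at least $k(r-1)+1$ components of order at most $t-1$. Finally, the greedy choice succeeds because $d-1 < \frac{k}{k+1}\cdot\frac{n}{r-1}$, which is where $k \ge t-3$ enters.

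\textbf{Main obstacle.} I expect the delicate step to be the arithmetic in the inductive step: checking that the ceilings in $d$ transfer correctly from $(n,r)$ to $(n',r-1)$, and that $n'$ lands in an admissible window for some $k'$. This is precisely where the narrow range $n \le k(r-1)(t-1)+r-1$ and the residue condition $r-2 \not\equiv 0 \pmod{t-1}$ should be used. That condition rules out the boundary case in which $\lceil n'/(r-2)\rceil$ jumps and the inherited degree bound falls short by one. I would first try to choose $W$ of size exactly $k(r-2)(t-1)+r-2$ or so, and then verify the inequality by a case split on $n \bmod (r-1)$.
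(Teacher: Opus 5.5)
\textbf{There is a genuine gap: your endgame's greedy count is exactly one vertex short.}

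Your skeleton is the paper's. The paper obtains this statement as a special case of Theorem~\ref{conj1}, whose proof has the same two cases: induction inside $N_R(u)$ when some red degree is large, and otherwise a blue minimum degree $\ge t-2$ that forces blue components of order at most $t-1$.

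Here $x=\lceil n/(r-1)\rceil=k(t-1)+1$, and for $t\ge 3$, $k\ge t-3$ one has $\lfloor x/(k+1)\rfloor=t-2$, so $d=x-t+2$. A greedily chosen vertex may exclude its own blue component ($t-1$ vertices) plus $d-1$ non-neighbours, i.e.\ up to $x$ vertices. After $r-1$ choices up to $(r-1)x\ge n$ vertices can be gone, so nothing guarantees an $r$-th vertex. In the paper's language, $H=B\cup\overline{G}$ has its independent sets equal to the red cliques, and $\Delta(H)\le (t-2)+(d-1)=x-1$. A greedy argument yields only $\chi(H)\le\Delta(H)+1=x$, which is compatible with $\alpha(H)\le r-1$.

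The inequality $d-1<\frac{k}{k+1}\cdot\frac{n}{r-1}$ you invoke is false in general: $t=r=3$, $k=10$, $n=41$ gives $19$ versus $18.6\ldots$. Even when it holds, it would only help if $n/(k+1)\ge(r-1)(t-1)$, which fails throughout the window.

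The paper supplies the missing unit with Brooks' theorem, applied to a component $C$ of $H$ with $\chi(C)\ge x$. This gives $x\le\Delta(H)\le x-M+t-3$, i.e.\ $M\le t-3$, a contradiction. The complete-graph exception is ruled out by $\alpha(R)\le x-1$: an $R$-independent $x$-set spans a blue graph of minimum degree $\ge (x-1)-\Delta(\overline{G})\ge M$, hence contains a blue $P_t$ by Lemma~\ref{lem:path_length}. The odd-cycle exception is ruled out by parity.

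\textbf{Your induction is not closed.} The condition $r-2\not\equiv 0\pmod{t-1}$ does not pass to $r-1$; for $t=3$ it never does. The case $r=2$ itself violates it. So you must induct on the statement without the residue condition. The paper does exactly this for the full window (Theorem~\ref{conj1}), so the residue condition is irrelevant rather than the crux you predict.

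\textbf{Your base case uses a false strict inequality.} Here $n-d=\lfloor n/(k+1)\rfloor=t-2$ exactly, not $>t-2$, so degree alone does not exclude components $K_{t-1}$. You need $(t-1)\nmid n=k(t-1)+1$.

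\textbf{That divisibility gives a shortcut in your own framework that avoids Brooks in this narrow window.} Take $|W|=k(r-2)(t-1)+1$, the smallest admissible size. It has $\lceil |W|/(r-2)\rceil=x$ and the same $k$, and $\delta(G[W])\ge |W|-d$, so induction applies. A red-degree bound of order $(r-2)(t-1)(k+1)$ would leave no usable blue degree. With this choice you get $d_R(v)\le k(r-2)(t-1)$, hence $\delta(B)\ge s+t-3$, where $n=k(r-1)(t-1)+s$. Blue components then have between $s+t-2$ and $t-1$ vertices. This forces $s=1$ and every blue component to have exactly $t-1$ vertices, contradicting $(t-1)\nmid k(r-1)(t-1)+1$.
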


\vspace{2mm}

While this theorem represents a significant step forward, the conjecture remains open for the vast majority of the parameter space, as their result is confined to an interval of length $r - 1$ near the lower bound of the $(r - 1)(t - 1)$ range and requires a divisibility constraint on $r$.

In this paper, we overcome these restrictions to fully establish Conjecture 1 across the broad regime $k \ge t - 3$. To achieve this, we introduce a novel framework that leverages the structural properties of the complement graph. By combining local path constraints with Brooks' Theorem, we construct an exact combinatorial argument that elegantly circumvents the fractional error terms typical of Turán-type applications.

\begin{theorem}\label{conj1}
For any parameter regime with $k \ge t-3$, Conjecture~\ref{coj-1} holds.
\end{theorem}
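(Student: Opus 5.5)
We argue by contradiction, starting from a red/blue colouring of $E(G)$ with no red $K_r$ and no blue $P_t$. Let $B$ be the blue graph and $H=\overline{G}\cup B$, so that the red graph is exactly $\overline{H}$. The absence of a red $K_r$ then says $\alpha(H)\le r-1$. We record two facts: $\Delta(\overline{G})\le D-1$, where $D:=\left\lceil \frac{k}{k+1}\left\lceil \frac{n}{r-1}\right\rceil\right\rceil$, and $B$ contains no $P_t$. The aim is to show that $\alpha(H)\le r-1$ forces some vertex to have $\overline{G}$-degree at least $D$, which is the contradiction.

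\textbf{Step 1 (structure of $B$).} Since $B$ is $P_t$-free, each component of $B$ has fewer than $t$ vertices or else has bounded path structure. The cleanest consequence is that every component $C$ of $B$ has $\alpha(B[C])\ge |C|/(t-1)$, by a greedy or DFS-tree argument on $P_t$-free graphs. More usefully, $V(G)$ can be partitioned into $B$-independent pieces. Using the DFS-tree property (all $B$-edges join ancestor–descendant pairs, and the depth is less than $t-1$), colour each vertex by its depth. This gives a partition $V=V_1\cup\dots\cup V_{t-1}$ into $B$-independent sets.

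\textbf{Step 2 (independence in $\overline{G}$ via Brooks).} Each $V_i$ is independent in $B$, so $\alpha(H[V_i])=\alpha(\overline{G}[V_i])\le r-1$. Since $\Delta(\overline{G})\le D-1$, Brooks' Theorem gives $\chi(\overline{G}[V_i])\le D-1$ unless the component in question is a $K_{D}$ or an odd cycle. Hence $|V_i|\le (r-1)(D-1)$ in the generic case, and $|V_i|\le (r-1)D$ in general, with equality only when $\overline{G}[V_i]$ is a disjoint union of $r-1$ copies of $K_D$. Summing over $i$ gives $n\le (t-1)(r-1)D$, and this is close to the threshold. The sharp count therefore needs more than this crude partition.

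\textbf{Step 3 (exact counting, main obstacle).} The real bound must use the hypothesis $k\ge t-3$. I would refine the argument by choosing, among all colourings, one that maximises the number of vertices covered by clique-like blocks of $\overline{G}$. Let $m=\lceil n/(r-1)\rceil$. Suppose $\overline{G}$ splits into $r-1$ near-cliques (Brooks extremal parts) of size at most $D$, and these are linked by $B$. Any $P_t$-free blue bipartite-type connection between the $r-1$ blocks then leaves at least $m-D\ge \lfloor m/(k+1)\rfloor$ vertices in some block that must be covered by blue. The condition $n>(r-1)(t-1)k$ gives $m/(k+1)$ roughly at least $(t-1)k/(k+1)$, and $k\ge t-3$ makes this exceed $t-3$. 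So one can string together $t$ vertices alternating between blocks using blue edges, since in $G$ each vertex sees all but at most $D-1$ others. This yields a blue $P_t$ and the contradiction.

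The delicate part is making the ceiling arithmetic in $D$ exact, rather than fractional as in a Turán-type estimate. Concretely, we must show $n-(r-1)(D-1)\ge$ enough "overflow" vertices to force a blue path of length $t-1$ through the $r-1$ red-free blocks. I expect this to require a case split on the remainder of $n$ modulo $r-1$.
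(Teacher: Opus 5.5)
\paragraph{Verdict.} Your plan has a genuine gap. Steps 1--2 never produce a contradiction, and Step 3 is a heuristic rather than a proof.

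\paragraph{Why Steps 1--3 fail.} Since $D\ge \frac{k}{k+1}\lceil \frac{n}{r-1}\rceil \ge \frac{k}{k+1}\cdot\frac{n}{r-1}$, your bound satisfies $(t-1)(r-1)D\ge \frac{(t-1)k}{k+1}\,n\ge n$ for every $t\ge 3$ and $k\ge 1$. So the inequality $n\le (t-1)(r-1)D$ is never violated. Splitting $V$ into $t-1$ blue-independent classes loses a factor of roughly $t-1$; this is not a lower-order ceiling error, and the bound is not ``close to the threshold''.

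Step 3 then has three problems. It posits a structure ($\overline{G}$ breaking into $r-1$ near-cliques linked by $B$) that nothing earlier produces. The quantity you maximise over colourings is never defined and the maximality is never used. The concluding claim that a blue $P_t$ can be strung together between blocks is simply asserted. The fact that each vertex misses at most $D-1$ others in $G$ says nothing about which of its edges are blue. The one ingredient you have right is that $k\ge t-3$ enters through $\lfloor m/(k+1)\rfloor\ge t-2$.

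\paragraph{What is missing.} The missing idea is any control on $\Delta(B)$. A $P_t$-free graph can be a huge star, so without such control $\Delta(H)$ is unbounded and Brooks' Theorem on $H$ gives nothing. The paper obtains this control through a degree dichotomy combined with induction on $r$. Set $x=m=\lceil n/(r-1)\rceil$ and $M=\lfloor x/(k+1)\rfloor$, so that $\delta(G)\ge n-x+M$.
\begin{itemize}
\item \emph{High red degree.} If some $u$ has $d_R(u)\ge n-x+1$, then $G[N_R(u)]$ satisfies the hypothesis of the conjecture for $r-1$; one checks $x'\ge x$ and $k'\ge k\ge t-3$. So it contains a red $K_{r-1}$, which together with $u$ gives a red $K_r$.
\item \emph{Low red degree everywhere.} Otherwise $\delta(B)\ge M\ge t-2\ge\lfloor t/2\rfloor$. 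Lemma~\ref{lem:partition} (Erd\H{o}s--Gallai) then forces every blue component to have at most $t-1$ vertices, so $\Delta(B)\le t-2$.
\item \emph{Bounding $\alpha(R)$.} An $R$-independent set of size $x$ would induce a blue graph of minimum degree at least $M=\lfloor x/(k+1)\rfloor$. By Lemma~\ref{lem:path_length} it would contain a blue path on $\lceil x/k\rceil\ge t$ vertices. Hence $\omega(H)=\alpha(R)\le x-1$.
\item \emph{Brooks on $H$.} Brooks' Theorem is applied to $H$ itself, not to pieces of $\overline{G}$. First, $\chi(H)\ge x$ because $\alpha(H)\le r-1$. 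The complete-graph exception is excluded by $\omega(H)\le x-1$. The odd-cycle exception can only occur when $t=3$, $k=1$, and is excluded by a parity argument. Finally $\Delta(H)\le (t-2)+(x-M-1)\le x-1$, which contradicts $\chi(H)\ge x$.
\end{itemize}
The degree dichotomy, the induction on $r$, and the bounds $\Delta(B)\le t-2$ and $\alpha(R)\le x-1$ are exactly what make the count exact. None of them appears in your proposal.
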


\noindent\textbf{Remark.} 
We  present a  construction showing that the minimum degree conditions in
Theorem \ref{conj1}  cannot be improved.
First,  we describe a graph $G$
with $n= (r-1)(t-1)(k+1)$ vertices and
$\delta(G)=n-\left\lceil \frac{k}{k+1}\lceil \frac{n}{r-1}\rceil \right\rceil-1$,
such that $G\nrightarrow(K_r,P_t)$.
The graph $G$ is obtained by placing $k+1$ cliques of size  $\approx  n/(r -1)(k+1)$
into each part of the Tur\'{a}n graph $T_{r-1}(n)$. The edges of these cliques are coloured blue, and the remaining edges of $G$ are coloured red.
From the construction, we conclude that the graph $G$  contains neither a red 
$K_r$ nor a blue  $P_t$.

In Conjecture~1, the parameter is defined as $k \in \mathbb{N}$. Depending on the convention for whether $0 \in \mathbb{N}$, allowing $k = 0$ yields the minimum degree condition $\delta(G) \ge n$, which no simple graph can satisfy. 
The vacuous case $k=0$ corresponds to graphs below the classical Ramsey threshold, which lack the Ramsey property. In this paper we study the non‑trivial regime $k \ge 1$.


Although our proof is restricted to the regime $k \ge t-3$, it fundamentally establishes the \emph{asymptotic truth} of Conjecture 1.3. For any fixed target graphs $K_r$ and $P_t$, the parameter $k \approx \frac{n}{(r-1)(t-1)}$ scales linearly with the graph order $n$. 
Consequently, as $n \to \infty$, the condition $k \ge t-3$ is automatically satisfied, which implies the conjecture for all sufficiently large graphs.

\section{Preliminaries}

We will  need the following straightforward (and presumably well-known) proposition,
which gives a tight lower bound condition on the minimum degree of a graph with $n$ vertices
to contain a path with at least $\lceil n/k\rceil$ vertices.
\begin{lemma}[Arag\~{a}o,  Marciano and  Mendon\c{c}a \cite{AMM}]\label{lem:path_length}
Let $n, k \in \mathbb{N}$ and let $G$ be a graph with $n$ vertices. If $G$ has a minimum degree of at least $\lfloor \frac{n}{k+1} \rfloor$, then it contains a path with at least $\lceil \frac{n}{k} \rceil$ vertices.
\end{lemma}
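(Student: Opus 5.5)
The approach is the classical longest-path (Dirac/Erd\H{o}s--Gallai type) argument, applied inside a large component chosen by pigeonhole. Set $\delta=\delta(G)\ge\lfloor n/(k+1)\rfloor$. If $n\le k$, then $\lceil n/k\rceil=1$ and there is nothing to prove, so assume $n>k$.

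\emph{Step 1: a large component.} Every component of $G$ has at least $\delta+1$ vertices. Since $\delta+1\ge\lfloor n/(k+1)\rfloor+1>n/(k+1)$, there are at most $k$ components. By pigeonhole, some component $C$ has $|C|\ge\lceil n/k\rceil$ vertices. We work inside $C$, which is connected and has minimum degree at least $\delta$.

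\emph{Step 2: the longest-path dichotomy.} Let $P=v_1\dots v_m$ be a longest path in $C$. By maximality, all neighbours of $v_1$ and of $v_m$ lie on $P$. If $m\le 2\delta$, the standard crossing argument applies: the sets $\{i: v_1v_{i+1}\in E\}$ and $\{i: v_iv_m\in E\}$ lie in $\{1,\dots,m-1\}$ and each has size at least $\delta$, so they intersect. This yields a cycle through all $m$ vertices of $P$. If this cycle missed some vertex of $C$, connectivity of $C$ would give an edge leaving the cycle, and hence a path on $m+1$ vertices, contradicting maximality. So either $m\ge 2\delta+1$, or $P$ spans $C$, in which case $m=|C|\ge\lceil n/k\rceil$ and we are done.

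\emph{Step 3: the arithmetic.} It remains to check that $2\delta+1\ge n/k$; integrality of $m$ then gives $m\ge\lceil n/k\rceil$. Using $\delta\ge (n-k)/(k+1)$, we compute
\[
\frac{2(n-k)}{k+1}+1-\frac{n}{k}=\frac{(k-1)(n-k)}{k(k+1)}\ge 0 \qquad (n\ge k,\ k\ge1).
\]
This holds, and the case $k=1$ is tight.

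The only delicate step is Step 2, namely the cycle-closing argument together with the use of connectivity. It is standard, but the index bookkeeping must be handled carefully, including the degenerate case where $v_1v_m$ is itself an edge. The rest is pigeonhole and the elementary inequality above.
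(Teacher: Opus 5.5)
Your proof is correct: the bound of at most $k$ components (each has at least $\lfloor n/(k+1)\rfloor+1>n/(k+1)$ vertices), pigeonhole to a component $C$ with $|C|\ge\lceil n/k\rceil$, the longest-path dichotomy (either $P$ spans $C$, or $P$ has at least $2\delta+1$ vertices, as in Lemma~\ref{g-path}), and your identity $\frac{2(n-k)}{k+1}+1-\frac{n}{k}=\frac{(k-1)(n-k)}{k(k+1)}\ge 0$ all check out; the degenerate case $m=2$ is also harmless, since both vertices of $P$ are then endpoints. The paper gives no proof of this lemma and simply quotes it from \cite{AMM} as well known, so there is nothing to compare against, but your argument is the standard one and uses exactly the Erd\H{o}s--Gallai ingredient the paper records next.
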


What minimum degree condition guarantees a path of a preassigned length?
This question was answered by Erd\H{o}s and Gallai \cite{Erd-Gallai-1959}.
\begin{lemma}[Erd\H{o}s and Gallai \cite{Erd-Gallai-1959}] \label{g-path}
Let $G$ be a connected graph with minimum degree $\delta$ and at least $2\delta +1$
vertices. Then $G$ contains a path of at least $2\delta +1$ vertices.
\end{lemma}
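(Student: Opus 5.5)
This is the classical longest-path argument of Dirac and P\'osa, combined with a rotation to a Hamiltonian cycle on the path. If $\delta=0$ the claim is trivial, since any single vertex is a path on $1=2\delta+1$ vertices. So assume $\delta\ge 1$, and let $P=v_0v_1\cdots v_m$ be a longest path in $G$. If $m+1\ge 2\delta+1$ we are done. Suppose instead that $m\le 2\delta-1$; we aim for a contradiction.

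By maximality of $P$, every neighbour of $v_0$ and every neighbour of $v_m$ lies on $P$, since otherwise $P$ could be extended. Define
\[
S=\{\, i\in\{0,\dots,m-1\} : v_0v_{i+1}\in E(G)\,\}, \qquad T=\{\, i\in\{0,\dots,m-1\} : v_mv_i\in E(G)\,\}.
\]
Then $|S|=d(v_0)\ge\delta$ and $|T|=d(v_m)\ge\delta$. Both sets lie in a ground set of size $m\le 2\delta-1<2\delta\le |S|+|T|$, so there is an index $i\in S\cap T$. For this $i$ we have $v_0v_{i+1}\in E(G)$ and $v_mv_i\in E(G)$. Hence
\[
C=v_0v_1\cdots v_i\,v_m v_{m-1}\cdots v_{i+1}\,v_0
\]
is a cycle through all $m+1$ vertices of $P$.

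This covers the degenerate cases automatically. If $i=m-1$, this is just the edge $v_0v_m$ closing $P$ into a cycle. If $i=0$, it is the edge $v_0v_m$ traversed the other way. So no separate treatment is needed, except to note that $m\ge \delta\ge 1$, because $v_0$ has $\delta$ neighbours on $P$.

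Next, $|V(G)|\ge 2\delta+1>m+1=|V(C)|$, so some vertex lies outside $C$. Because $G$ is connected, some vertex $w\notin V(C)$ is adjacent to a vertex $v_j$ of $C$. Open $C$ at $v_j$ by deleting one of its two cycle edges at $v_j$. This gives a path on all of $V(C)$ that ends at $v_j$, and appending $w$ yields a path on $m+2$ vertices. That contradicts the maximality of $P$, so $m+1\ge 2\delta+1$.

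The only step needing care is the index bookkeeping in $S\cap T$, namely checking that the shifted index sets live in the same ground set $\{0,\dots,m-1\}$ so that the pigeonhole count $|S|+|T|>m$ applies. Everything else is routine.
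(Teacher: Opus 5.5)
Your proof is correct. It is the standard Dirac-type longest-path argument: close a longest path into a cycle via the pigeonhole count on $S\cap T$, then use connectivity and $|V(G)|>m+1$ to extend it. The paper gives no proof of this lemma and only cites Erd\H{o}s and Gallai, so there is nothing further to compare.

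One harmless degenerate point: when $m=1$ (possible only if $\delta=1$), your $C$ is just the edge $v_0v_1$ rather than a genuine cycle. The only property you use afterwards is a path spanning $V(C)$ that ends at any prescribed vertex, and that holds trivially there, so the argument is unaffected.
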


By Lemma \ref{g-path}, we can directly obtain the following lemma. Similar results can be found in \cite{AMM}.

\begin{lemma}\label{lem:partition}
Let $n \in$ $\mathbb{N}$, and let $G$ be a $P_d $-free graph with $n$ vertices and $\delta(G) \geq \left\lfloor \frac{d}{2} \right\rfloor $. There exists a partition 
$V(G) = A_1 \cup \ldots \cup A_m $ for some integer $m$, such that for every 
$ i \in [m]$:
\[
\left\lfloor \frac{d}{2} \right\rfloor+1 \leq |A_i| \leq d - 1,
\]
where $A_i $ is a connected component of $G$, and $G[A_i]$ contains a Hamiltonian cycle.
\end{lemma}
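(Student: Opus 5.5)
The plan is to work one connected component at a time and take the partition to be the vertex sets of the components of $G$. Fix a component $C$, and let $\delta_C$ denote the minimum degree of $G[C]$. Since $C$ is a component, every neighbour of a vertex of $C$ lies in $C$, so $\delta_C \ge \delta(G) \ge \lfloor d/2\rfloor$. The lower bound on $|C|$ is then immediate: a vertex of $C$ together with its neighbours gives
\[
|C| \ge \delta_C + 1 \ge \lfloor d/2\rfloor + 1 .
\]

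Next I bound $|C|$ from above using Lemma~\ref{g-path}. Suppose, for contradiction, that $|C| \ge 2\delta_C + 1$. Since $G[C]$ is connected, Lemma~\ref{g-path} gives a path in $G[C]$ with at least $2\delta_C+1 \ge 2\lfloor d/2\rfloor + 1$ vertices. If $d$ is odd, this bound equals $d$; if $d$ is even, it equals $d+1$. In either case $G$ contains a path on $d$ vertices, i.e.\ a copy of $P_d$, contradicting $P_d$-freeness. Hence $|C| \le 2\delta_C$, or equivalently $\delta_C \ge |C|/2$.

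This inequality is exactly Dirac's condition. Hence, provided $|C|\ge 3$, Dirac's theorem gives a Hamiltonian cycle in $G[C]$. Deleting one edge of that cycle yields a path on $|C|$ vertices. Since $G$ is $P_d$-free, $|C| \le d-1$, which is the stated upper bound. Setting $A_1,\dots,A_m$ to be the components completes the argument.

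The only delicate point is the degenerate small case needed to apply Dirac's theorem. If $d \ge 4$, then $|C| \ge \lfloor d/2\rfloor + 1 \ge 3$ and Dirac applies directly. For $d \le 3$, the hypothesis forces $\lfloor d/2\rfloor \le 1$ and, combined with $|C|\le 2\delta_C$, components of size $2$. These must be treated by convention: an edge is regarded as a degenerate Hamiltonian cycle, or the case is excluded since $d = t$ is large in the intended application. Apart from that, the main step is the parity check in the second paragraph, where $2\lfloor d/2\rfloor + 1 \ge d$ must hold for both parities of $d$.
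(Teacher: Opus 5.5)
Your proof is correct. It follows the route the paper indicates, namely applying the Erd\H{o}s--Gallai bound (Lemma~\ref{g-path}) to each component, but your write-up is more complete than the paper's one-line claim that the lemma follows ``directly'' from Lemma~\ref{g-path}. Lemma~\ref{g-path} alone gives neither the Hamiltonian-cycle clause nor, for even $d$, the exclusion of a component with $|C|=d$ and $\delta_C=d/2$; both really need Dirac's theorem, used as you use it.

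Your caveat about $d=3$ is right: there every component is a single edge, so the Hamiltonian-cycle clause fails literally. You should not dismiss this case by saying $t$ is large in the application, because the paper explicitly treats $t=3$, including the subcase $t=3$, $k=1$. Instead, note that for $d=3$ the component structure and the bounds $\lfloor d/2\rfloor+1\le |A_i|\le d-1$ still hold. These are the only parts of the lemma the paper uses, to derive $\Delta(B)\le t-2$.
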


\begin{lemma}[Brooks' Chromatic Theorem \cite{brooks1941}]\label{lem:brooks}
If $G$ is a connected graph, and is neither an odd cycle nor a complete graph, then
$\chi(G) \le \Delta(G)$.
\end{lemma}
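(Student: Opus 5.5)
We prove Brooks' theorem in Lovász's greedy-ordering form. The idea is to list the vertices so that every vertex except the last has a neighbour appearing later in the list, and then colour greedily from $\{1,\dots,\Delta\}$ with $\Delta=\Delta(G)$. Any such vertex has at most $\Delta-1$ earlier neighbours when it is coloured, so it always finds a free colour. The whole argument therefore comes down to arranging that the last vertex also sees a repeated or missing colour among its neighbours.

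\emph{Small cases and the non-regular case.} If $\Delta\le 2$, then $G$ is a path or a cycle. A path is $2$-colourable. A cycle that is neither odd nor complete is even, hence $2$-colourable. So assume $\Delta\ge 3$.

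Suppose first that $G$ is not $\Delta$-regular. Pick a vertex $v$ with $d(v)<\Delta$ and run a breadth-first search from $v$, which reaches every vertex because $G$ is connected. List the vertices in reverse BFS order, so that $v$ comes last. Every vertex other than $v$ has its BFS parent later in the list, so it is coloured greedily. The vertex $v$ has fewer than $\Delta$ neighbours in total, so a colour remains for it as well.

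\emph{Regular graphs with a cut vertex.} Now let $G$ be $\Delta$-regular. Suppose $G$ has a cut vertex $u$, and let $G_1,\dots,G_s$ be the subgraphs induced by $u$ together with each component of $G-u$. In each $G_i$, the vertex $u$ has degree less than $\Delta$, so $G_i$ is connected and not regular. By the previous case each $G_i$ is $\Delta$-colourable. Permute the colours in each $G_i$ so that all of them give $u$ the same colour, and glue the colourings together.

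\emph{The $2$-connected regular case.} This is the step I expect to be the main obstacle. Let $G$ be $2$-connected and $\Delta$-regular with $\Delta\ge3$. It suffices to find a vertex $v$ with two neighbours $x,y$ such that $xy\notin E(G)$ and $G-x-y$ is connected. Given such a triple, colour $x$ and $y$ both with colour $1$. Then list the remaining vertices in reverse BFS order of $G-x-y$ from $v$, ending with $v$, and colour greedily. Each vertex before $v$ has a later neighbour, so it gets a colour. When $v$ is reached, two of its $\Delta$ neighbours share colour $1$, so at most $\Delta-1$ colours are blocked and $v$ can be coloured.

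To find the triple, split according to connectivity.

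If $G$ is $3$-connected: since $G$ is connected and not complete, there is an induced path $x\,v\,y$, obtained from two vertices at distance $2$. Then $xy\notin E(G)$, and $G-x-y$ is connected by $3$-connectivity.

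If $G$ has connectivity exactly $2$: choose $v$ such that $G-v$ has a cut vertex. Then $G-v$ is connected and has at least two leaf blocks $B_1,B_2$. Since $G$ is $2$-connected, $v$ has a neighbour $x$ in $B_1$ and a neighbour $y$ in $B_2$, each not equal to the cut vertex of its leaf block. In particular $xy\notin E(G)$.
\begin{itemize}
\item Deleting $x$ and $y$ leaves $G-v-x-y$ connected. This is because removing a non-cut vertex from each of two distinct leaf blocks does not disconnect the block tree, using $|B_i|\ge 3$ or the degree count.
\item The vertex $v$ still has a neighbour in $G-v-x-y$, since $d(v)=\Delta\ge3$.
\end{itemize}
Hence $G-x-y$ is connected.

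The delicate points are this leaf-block bookkeeping and checking that $|B_i|\ge3$ does hold. The latter comes from $\Delta$-regularity with $\Delta\ge3$: a leaf block that is a single edge would force its non-cut vertex to have degree at most $2$ in $G$.
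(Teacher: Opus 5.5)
The paper does not prove this lemma. It is quoted from Brooks (1941) and used as a black box: it is applied to a component $C$ of $H$, after the paper has excluded the complete-graph and odd-cycle cases by hand. So there is no argument in the paper to compare with.

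What you have written is Lov\'asz's short greedy-ordering proof, and it is correct.
\begin{itemize}
\item The reverse-BFS ordering settles the non-regular case.
\item The cut-vertex gluing is valid. Each $G_i$ has $\Delta(G_i)=\Delta$ and is not regular, because $u$ has degree less than $\Delta$ in $G_i$ while every other vertex of $G_i$ keeps all its neighbours there. Hence the greedy argument colours $G_i$ with $\Delta$ colours.
\item In the $2$-connected case your triple works. Each of $x,y$ lies in exactly one block of $G-v$, so $xy\notin E(G)$. The bound $|B_i|\ge 3$ makes $B_i$ $2$-connected, so $B_i-x$ (respectively $B_i-y$) stays connected. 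Finally $d(v)\ge 3$ guarantees a neighbour of $v$ outside $\{x,y\}$, so $G-x-y$ is connected.
\end{itemize}

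One small point needs tidying. In the case $\Delta\le 2$, the sentence ``a path is $2$-colourable'' gives $\chi(G)\le\Delta(G)$ only when $\Delta=2$. You should say explicitly that $\Delta\le 1$ forces $G\in\{K_1,K_2\}$, which is complete and therefore excluded.

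Compared with citing the theorem, your route makes the lemma self-contained and constructive, since it produces a $\Delta$-colouring by a greedy procedure. Nothing in the paper's later use of the lemma depends on which proof is used.
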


\smallskip

\section{Proof of Theorem \ref{conj1}}

For a graph $G=(V,E)$ and $U\subseteq V$, denote $N_G(v,U)$ by the set of all
neighbors of $v$ in $U$ and $d_G(v,U)=|N_G(v,U)|$.
In particular, denote $N_G(v)=N_G(v,V)$ and $d_G(v)=d_G(v,V)$.
We use $\Delta(G)$  to denote  the maximum degree of graph $G$.
Let $\delta (G)$  denote the minimum degree of a graph $G$.
Given a subset $U$
of $V(G)$, $G[U] $ denotes the subgraph of $G$ induced by $U$.
The clique number, denoted by $\omega(G)$, is the number of vertices in the largest complete subgraph (clique) of a graph $G$.
The chromatic number of a graph $G$, 
denoted  by $	\chi(G)$, 
is the smallest number of colors needed to color the vertices of $G$ so that no two adjacent vertices share the same color

\bigskip

\noindent{\bf Proof of Theorem \ref{conj1}.}
We proceed by mathematical induction on $r \ge 2$. For algebraic precision, we define the integer $x = \left\lceil \frac{n}{r-1} \right\rceil$. By utilizing the ceiling function identity 
\[
\left\lceil y - \frac{y}{k+1} \right\rceil = y - \left\lfloor \frac{y}{k+1} \right\rfloor, 
\]
which holds 
for all integers $y$, 
the given minimum degree condition is exactly and equivalently transformed into
\begin{equation*}\label{ineq-3}
\delta(G) \ge n - x + \left\lfloor \frac{x}{k+1} \right\rfloor.
\end{equation*}
Let $M = \left\lfloor \frac{x}{k+1} \right\rfloor$. Thus, we have 
\begin{equation}\label{ineq-1}
\delta(G) \ge n - x + M.
\end{equation}
Let $\overline{G}$ denote the absolute complement graph of $G$ relative to the complete graph $K_n$ on $V(G)$ (i.e., the set of all missing non-edges of $G$). 
Over the entire graph, the maximum degree of $\overline{G}$ is bounded above by
\begin{equation}\label{eq:max_missing}
	\Delta(\overline{G}) = n - 1 - \delta(G) \le x - M - 1.
\end{equation}

\vspace{0.3cm}

\noindent\textbf{The Base Case ($r=2$)}:
If $r=2$, $x = \lceil n/1 \rceil = n$, and the 
degree condition simplifies to 
\[
\delta(G) \ge \left \lfloor \frac{n}{k+1} \right\rfloor. 
\]
Assume for contradiction that $G$ does not contain a red $K_2$; then all edges of $G$ must be blue. According to Lemma \ref{lem:path_length}, any purely blue graph on $n$ vertices with a minimum degree at least $\lfloor \frac{n}{k+1} \rfloor$ must contain a blue path of length at least $\lceil \frac{n}{k} \rceil$.
The parameter bounds give $n > (t-1)k$, and hence $\frac{n}{k} > t-1$.
By the discreteness of the ceiling function, we obtain
\[
\left\lceil \frac{n}{k} \right\rceil \ge t. 
\]
Hence, the blue graph $G$ contains a blue $P_t$, which establishes the base case.

\vspace{0.3cm}

Assume the conjecture holds for $r-1$ ($r\geq 3$)  and
$k\geq t-3$. 
Let the edges of $G$ be 2-coloured in red and blue, yielding the spanning red subgraph $R$ and the spanning blue subgraph $B$. 
Assume for contradiction that this colouring contains neither a red $K_r$ nor a blue $P_t$. 
We divide the analysis into two collectively exhaustive cases:

\medskip

\textbf{Case 1.} Suppose there exists a vertex $u \in V(G)$ such that $d_R(u) \ge n - x + 1$.

\medskip

Let $N = |N_R(u)|$ denote the order of the induced subgraph $G' = G[N_R(u)]$. By assumption, 
\begin{equation}\label{ineq-8}
N \ge n - x + 1. 
\end{equation}
For any vertex $v \in V(G')$, its degree within the induced subgraph
of $G'$ is bounded by
\begin{align}\label{ineq-9}
\delta(G') \ge \delta(G) - (n - N) \overset{(\ref{ineq-1})}{\ge}
N-(x-M) 
= N - \left\lceil \frac{k}{k+1} x \right\rceil,
\end{align}
where we  used the fact that $M = \left\lfloor \frac{x}{k+1} \right\rfloor$.
To apply the inductive hypothesis to $G'$, we define the corresponding parameters $x'$ and $k'$. Let $x' = \lceil \frac{N}{r-2} \rceil$. 
We first demonstrate that $x' \ge x$. 
Since $x = \lceil \frac{n}{r-1} \rceil$, we can write 
\[
n = (r-1)x - j
\]
for some $0 \le j \le r-2$.
Substituting this into the lower bound for $N$
\begin {equation}\label{ineq-2}
N \overset{(\ref{ineq-8})}{\ge}n-x+1\ge (r-1)x - j - x + 1 = (r-2)x - j + 1.
\end{equation}
Dividing by $r-2$ and taking the ceiling, we obtain
\[
x' = \left\lceil \frac{N}{r-2} \right\rceil \ge \left\lceil \frac{(r-2)x - j + 1}{r-2} \right\rceil = x + \left\lceil \frac{1-j}{r-2} \right\rceil.
\]
If $1 \le j \le r-2$, 
then 
\[
\left\lceil \frac{1-j}{r-2} \right\rceil = 0,
\]
implying $x' \ge x$. 
If $j=0$ (i.e., $n$ is a multiple of $r-1$), then $\lceil \frac{1}{r-2} \rceil = 1$, implying $x' \ge x+1$. In all subcases, $x' \ge x$ holds.

Next, we define the unique integer $k'$ such that 
\[
(r-2)(t-1)k' < N \le (r-2)(t-1)(k'+1).
\]
Since $n > (r-1)(t-1)k$, we have 
\begin{equation}\label{x-ineq}
x =\left \lceil \frac{n}{r-1} \right\rceil \ge (t-1)k + 1. 
\end{equation}
Together with the lower bound on $N$, we obtain
$$
N \overset{(\ref{ineq-2})}{\ge} (r-2)(x-1) + 1 \ge (r-2)(t-1)k + 1,
$$
which ensures that $k' \ge k$.
Consider the function 
\[
g(y, z) = \left\lceil \frac{y}{y+1} z \right\rceil.
\] 
Since $g$ is non-decreasing in both $y$ (for $y \ge 1$) and $z$, the relations $k' \ge k$ and $x' \ge x$ imply
\[
\left\lceil \frac{k'}{k'+1} x' \right\rceil \ge \left\lceil \frac{k}{k+1} x \right\rceil.
\]
Consequently, the minimum degree of $G'$ satisfies the required threshold for the inductive hypothesis:
\begin{align*}
\delta(G') \overset{(\ref{ineq-9})}{\ge} N - \left\lceil \frac{k}{k+1} x \right\rceil \ge N - \left\lceil \frac{k'}{k'+1} x' \right\rceil.
\end{align*}
Since $k'\ge k$ and our parameter regime is $k\ge t-3$, we have $k'\ge t-3$.
This ensure $G'$ satisfies the 
condition to apply the inductive hypothesis.
By the inductive hypothesis, since $G'$ contains no blue $P_t$, 
it must contain a red $K_{r-1}$. 
This clique, together with vertex $u$ and the red edges from $u$ to $N_R(u)$, forms a red $K_r$ in $G$, completing the proof for Case 1.
\hfill$\Box$

\medskip

\textbf{Case 2.} For all $v \in V(G)$, we have $d_R(v) \le n - x$.

\medskip

This assumption yields the
following lower bound 

\begin{align}\label{ineq-4}		
\delta(B) \ge \delta(G) - \Delta(R) \overset{(\ref{ineq-1})}{\ge} (n - x + M) - (n - x) = M.
\end{align}		

Since $x \ge (t-1)k + 1$ by (\ref{x-ineq}), we obtain that
\begin{equation*}\label{eq:M_lower}
M = \left\lfloor \frac{x}{k+1} \right\rfloor \ge \left\lfloor \frac{(t-1)k+1}{k+1} \right\rfloor = t - 1 + \left\lfloor \frac{2-t}{k+1} \right\rfloor.
\end{equation*}
Because we are operating within the regime 
$k \ge t-3$ (equivalent to $k+1 \ge t-2$), we have $2-t \ge -(k+1)$. Dividing both sides by the positive integer $k+1$ yields 
\[		
\frac{2-t}{k+1} \ge -1. 
\]
This implies that $\left\lfloor \frac{2-t}{k+1} \right\rfloor \ge -1$, which yields the lower bound
\begin{equation}\label{eq:M_t_bound}
M \ge t - 1 - 1 = t - 2.
\end{equation}

If $t=2$, since $x \ge k+1$ by (\ref{x-ineq}), we know $M = \lfloor \frac{x}{k+1} \rfloor \ge 1$. 
Thus we have
\[
\delta(B) \overset{(\ref{ineq-4})}{\ge} M \ge 1. 
\]
However, avoiding a blue $P_2$ means the blue graph $B$ must be edgeless ($\Delta(B)=0$), which directly contradicts $\delta(B) \ge 1$. Therefore, we only need to consider $t \ge 3$.

For $t \ge 3$, it is easily verified that 
\[
\delta(B) \ge M \overset{(\ref{eq:M_t_bound})}{\ge} t-2 \ge \left\lfloor \frac{t}{2} \right\rfloor
\]
holds. We apply Lemma \ref{lem:partition} to the blue subgraph $B$ by setting the parameter $d=t$. 
Since $B$ is $P_t$-free, the lemma implies that the vertex set partitions into connected components $A_1,\dots,A_m$ with no blue edges between distinct components and with $|A_i|\le t-1$ for all $i$.
Consequently, the  maximum degree of the blue graph satisfies
\begin{equation}\label{eq:max_blue}
	\Delta(B) \le \max_i |A_i| - 1 \le t - 2.
\end{equation}

To further  restrict the red subgraph,
 we need an  upper bound on its independence number.

\begin{claim}\label{claim_alpha}
The independence number of the red subgraph $R$ satisfies $\alpha(R) \le x - 1$.
\end{claim}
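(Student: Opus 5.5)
Take an independent set $I$ in $R$ with $|I| = \alpha(R)$ and suppose, for contradiction, that $|I| \ge x$. Every pair of vertices in $I$ is then either a non-edge of $G$ (an edge of $\overline{G}$) or a blue edge. So the graph $H = G[I] \cap B$ together with $\overline{G}[I]$ covers all pairs inside $I$. This means each vertex $v \in I$ satisfies
\[
|I| - 1 \le d_B(v,I) + d_{\overline{G}}(v,I).
\]
The plan is to show this is impossible by bounding both terms on the right.

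For the complement term, (\ref{eq:max_missing}) gives $d_{\overline{G}}(v) \le x - M - 1$. For the blue term, Lemma \ref{lem:partition} gives finer information than (\ref{eq:max_blue}): the blue neighbours of $v$ all lie in its own component $A_i$, so $d_B(v,I) \le |A_i \cap I| - 1$. A cruder route is to combine $\Delta(B)\le t-2$ with the complement bound, giving $|I| - 1 \le (t-2) + (x-M-1)$, that is $|I| \le x - M + t - 2$. Since $M \ge t-2$ by (\ref{eq:M_t_bound}), this yields only $|I| \le x$, which is one too weak. The main obstacle is therefore gaining this extra $1$.

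To gain it I will work with a whole component rather than a single vertex, using the averaging that the component structure allows. Pick a component $A_i$ and a vertex $v \in I \cap A_i$. Every vertex of $I \setminus A_i$ must be a non-neighbour of $v$ in $G$, because there are no blue edges between components and no red edges inside $I$. Hence
\[
|I| - |I \cap A_i| \le x - M - 1 .
\]
Taking $|I| \ge x$, this forces $|I\cap A_i| \ge M+1 \ge t-1$. But $|A_i| \le t-1$, so every component meeting $I$ would have to lie entirely inside $I$ and have exactly $t-1$ vertices, with equality holding throughout. In particular $M = t-2$, $|I| = x$ exactly, and every $v \in I$ is nonadjacent in $G$ to all of $I\setminus A_i$, which makes its $\overline{G}$-degree equal to $x-M-1$.

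Now exploit this tightness. First, $|A_i| = t-1 = M+1$ together with the equality chain means $d_B(v) = t-2$ inside $A_i$ for every $v$ in $A_i$. Next consider a vertex $v \in A_i$; its $\overline{G}$-neighbourhood is exactly $I \setminus A_i$. If $I \ne V(G)$, look at vertices outside $I$: such a vertex lies in some other component $A_j$ that does not meet $I$, and it must be adjacent in $G$ to all of $I$ (the vertices of $I$ have used up their non-edges). I will then try to derive a contradiction by counting. One option is to show that $n$, $x$ and $M=t-2$ together with $x \ge (t-1)k+1$ force $\lfloor x/(k+1)\rfloor \ge t-1$ in these tight cases. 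Another is to use that $|I| = x$ is a union of full components of size $t-1$, so $(t-1) \mid x$, while $x \ge (t-1)k+1$ and $M = \lfloor x/(k+1) \rfloor = t-2$ give $x \le (t-1)(k+1) - 1$. With $(t-1)\mid x$ these force $x=(t-1)k$, contradicting $x\ge (t-1)k+1$. I expect this divisibility step to be the cleanest way to close the off-by-one gap, and it is the step I would verify first.
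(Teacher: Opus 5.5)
Your argument is correct, but it proves the claim by a different route from the paper.

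\textbf{The paper's route.} The paper never uses the component structure of $B$ here. It takes $W$ of size exactly $x$ and notes that every $G$-edge inside $W$ is blue. Hence $\delta(B[W]) \ge (x-1)-(x-M-1) = M = \lfloor x/(k+1)\rfloor$. It then applies Lemma~\ref{lem:path_length} to $B[W]$, which gives a blue path on at least $\lceil x/k\rceil \ge t$ vertices, using only $x \ge (t-1)k+1$.

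\textbf{Your route.} You use the partition from Lemma~\ref{lem:partition} (components of size at most $t-1$, no blue edges between them), together with $\Delta(\overline{G}) \le x-M-1$ and $M \ge t-2$. For any $v\in I$, all of $I\setminus A_i$ consists of $G$-non-neighbours of $v$. This forces $M=t-2$ and $|I|=x$, and it forces every component meeting $I$ to be a full block of exactly $t-1$ vertices contained in $I$. Hence $(t-1)\mid x$. That is impossible: $M=t-2$ gives $x<(t-1)(k+1)$, while $x>(t-1)k$, and no multiple of $t-1$ lies strictly between these.

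\textbf{Comparison.} The paper's route is shorter. It is also independent of the regime $k\ge t-3$ and of the Case~2 structure, so it would work for any $k$. Yours avoids the path lemma and uses only what Case~2 has already established. It also exposes how rigid the configuration is: $\delta(B)\ge t-2$ together with components of size at most $t-1$ already forces $B$ to be a disjoint union of blue $K_{t-1}$'s.

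\textbf{Write-up fixes.} Drop the exploratory paragraph about vertices outside $I$ and the vague ``first option''; the divisibility step alone finishes the proof. Also, your final inequalities give $x\le (t-1)k$, not $x=(t-1)k$.
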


\medskip

\noindent{\bf Proof.}
Assume for contradiction that 
there exists an independent set in $R$ of size at least 
$x$. Let $W$ be a subset of this independent set of size exactly
$x$ . 
Because $W$ is edgeless in $R$, 
the edge set of the induced subgraph $G[W]$ consists entirely of blue edges and missing edges $\overline{G}$.
For any vertex $v \in W$, combined with \eqref{eq:max_missing}, its  blue degree within $W$ is bounded below by

\[
\delta(B[W]) \ge (|W| - 1) - \Delta(\overline{G}) \ge (x - 1) - (x - M - 1) = M.
\]

According to Lemma \ref{lem:path_length}, the blue graph $B[W]$ on $x$ vertices 
with minimum degree 
\[ 
M=\left\lfloor \frac{x}{k+1} \right\rfloor
\]
has a blue path of length $\lceil \frac{x}{k} \rceil$. 
By (\ref{x-ineq}), we  conclude that 
the length of this blue path is 
\[
\left\lceil \frac{x}{k} \right\rceil \ge \left\lceil t - 1 + \frac{1}{k} \right\rceil = t.
\]
This gives a blue  $P_t$, 
which contradicts the assumption. 
Hence, we conclude that $\alpha(R) \le x-1$.
\hfill$\Box$

\vspace{0.3cm}

With the properties of the red subgraph established, we now consider its absolute complement to apply  chromatic bounds.
Let $H$ be the complement of the red subgraph with respect to the complete graph $K_n$.
Its edge set is  defined as 
\[
E(H) = E(K_n) \setminus E(R) = E(B) \cup E(\overline{G}).
\]
From \eqref{eq:max_missing} and \eqref{eq:max_blue}, we obtain that
\begin{equation}\label{eq:max_H}
	\Delta(H) \le \Delta(B) + \Delta(\overline{G}) \le (t - 2) + (x - M - 1) = x - M + t - 3.
\end{equation}

Since $H$ is the complement of $R$ in $K_n$, two vertices are non-adjacent in $H$ if and only if they are adjacent in $R$.
Therefore, independent sets in $H$ are in one-to-one correspondence with cliques (complete subgraphs) in $R$. 
Our assumption that $R$ contains no $K_r$ implies $\omega(R) \le r-1$, so the independence number of $H$ satisfies $\alpha(H) = \omega(R) \le r-1$.
By the  chromatic inequality,
we have
\begin{equation}\label{ineq-11}
\chi(H) \ge \left\lceil \frac{n}{\alpha(H)} \right\rceil \ge \left\lceil \frac{n}{r-1} \right\rceil = x.
\end{equation}

From  Claim~\ref{claim_alpha}, 
the maximum clique of $H$ satisfies the exact duality $\omega(H) = \alpha(R) \le x-1$.	
We apply Lemma~\ref{lem:brooks} (Brooks' Theorem).
Let $C$ be a connected component of $H$ such that $\chi(C) = \chi(H) \ge x$. 
Unless $C$ is a complete graph or an odd cycle, we  have $\chi(C) \le \Delta(C)$. 
We now rule out these two exceptional cases:

If $C$ is a complete graph, then 
$\chi(C) = \omega(C)$. 
However, since 
\[
\omega(C) \le \omega(H) \le x-1 < x \le \chi(C), 
\]
yielding a 
contradiction. 
Hence, $C$ cannot be a complete graph.

If $C$ is an odd cycle, then $\chi(C)=3$ and $\Delta(C)=2$. 
Since $\chi(C)= \chi(H)\ge x$  by (\ref{ineq-11}), it follows that $x \le 3$.
Combined with $x \ge (t-1)k+1$ by  (\ref{x-ineq}), we obtain $$(t-1)k \le 2.$$
Since $t \ge 3$ and $k \ge 1$, this forces $t=3$ and $k=1$, yielding $x=3$.
Then 
\[ 
M =\left\lfloor \frac{x}{k+1} \right\rfloor= \left\lfloor \frac{3}{2} \right\rfloor = 1.
\]
Substituting into ~\eqref{eq:max_H} yields 
\[
\Delta(H) \le x - M + t - 3 = 3 - 1 + 3 - 3 = 2. 
\]
Since $\Delta(C)=2$, we have exactly $\Delta(H)=2$.
For any vertex $v$ on the odd cycle $C$, its degree in $H$ must be exactly 2. 
Since 
\[
d_H(v) = d_B(v) + d_{\overline{G}}(v), 
\]
we know $\Delta(B) \le t-2 = 1$ and $\Delta(\overline{G}) \le x-M-1 = 1$ by (\ref{eq:max_blue}) and (\ref{eq:max_missing}). 
For their sum to be 2, we must have
 $d_B(v) = d_{\overline{G}}(v) = 1$ for all  $v\in C$.
This implies that the blue edge set $E(B)$ must form a 1-regular subgraph (i.e., a perfect matching) on the odd cycle $C$. 
However, an odd cycle has an odd number of vertices and thus cannot contain a perfect matching. This gives a contradiction, ruling out the odd cycle exception.

\vspace{0.3cm}

Since all exceptional cases have been excluded, Brooks' Theorem can be applied to the connected component $C$, yielding	

\[
x \overset{(\ref{ineq-11})}{\le} \chi(H) = \chi(C) \le \Delta(C) \le \Delta(H) \overset{(\ref{eq:max_H})}{\le} x - M + t - 3 ,
\]		
which implies $M \le t - 3$.

However, this contradicts the lower bound $M \ge t-2$ established in \eqref{eq:M_t_bound}, leading to the impossible inequality $t-2 \le t-3$. 
This proves Theorem~\ref{conj1}, so Conjecture~\ref{coj-1} holds under the condition $k \ge t-3$.
 \hfill$\Box$

\end{document}